\newtheorem{theorem}{Theorem}[section]
\newtheorem{lemma}[theorem]{Lemma}
\newtheorem{definition}[theorem]{Definition}
\begin{document}

\title{More non-bipartite forcing pairs}

\author{Tam\'as Hubai\thanks{Institute of Mathematics, E\"otv\"os Lor\'and University, Budapest.  Previous affiliation: Department of Computer Science and DIMAP, University of Warwick, Coventry CV4 7AL, UK. E-mail: {\tt htamas@cs.elte.hu}. This author was supported by the Engineering and Physical Sciences Research Council Standard Grant number EP/M025365/1.}\and
        Dan Kr\'al\thanks{Faculty of Informatics, Masaryk University, Botanick\'a 68A, 602 00 Brno, Czech Republic, and Mathematics Institute, DIMAP and Department of Computer Science, University of Warwick, Coventry CV4 7AL, UK. E-mail: {\tt dkral@fi.muni.cz}. This author was supported by the European Research Council (ERC) under the European Union's Horizon 2020 research and innovation programme (grant agreement No 648509). This publication reflects only its authors' view; the European Research Council Executive Agency is not responsible for any use that may be made of the information it contains.}\and
        Olaf Parczyk\thanks{Institut f\"ur Mathematik, Technische Universit\"at Ilmenau, 98684 Ilmenau, Germany. E-mail: {\tt olaf.parczyk@tu-ilmenau.de} and {\tt yury.person@tu-ilmenau.de}. These authors were supported by DFG grant PE 2299/1-1.}\and
\newcounter{lth}
\setcounter{lth}{3}
        Yury Person$^\fnsymbol{lth}$}

\date{}

\maketitle

\begin{abstract}
We study pairs of graphs $(H_1,H_2)$ such that
every graph with the densities of $H_1$ and $H_2$ close to the densities of $H_1$ and $H_2$ in a random graph is quasirandom;
such pairs $(H_1,H_2)$ are called forcing.
Non-bipartite forcing pairs were first discovered by Conlon, H\`an, Person and Schacht~[{\em Weak quasi-randomness
for uniform hypergraphs}, Random Structures Algorithms \textbf{40} (2012),
no.~1, 1--38]:
they showed that $(K_t, F)$ is forcing
where $F$ is the graph that arises from $K_t$ by iteratively doubling its vertices and edges in a prescribed way $t$ times.
Reiher and Schacht~[{\em Forcing quasirandomness with triangles}, 
Forum of Mathematics, Sigma. Vol. 7, 2019] strengthened this result for $t=3$ by proving that two doublings suffice and
asked for the minimum number of doublings needed for $t>3$.
We show that $\lceil(t+1)/2\rceil$ doublings always suffice.
\end{abstract}

\section{Introduction and results}
The systematic study of quasirandom graphs has been initiated by Thomason~\cite{Thomason-2,Thomason-1} and
Chung, Graham and Wilson~\cite{CGW89} in the 1980's.
Since then, many properties of quasirandom graphs were described.
We refer to the surveys~\cite{KSSSz02,KS06}. 

A key property of a quasirandom graph is an almost uniform edge distribution.
A sequence $(G_n)_{n\in \mathbb{N}}$ of graphs is {\em $p$-quasirandom}
if $e_{G_n}(U)=p\binom{|U|}{2}+o(|G_n|^2)$ for all subsets $U\subseteq V(G_n)$,
where $|G_n|$ is the number of vertices of $G_n$ and $e_{G_n}(U)$ is the number of edges of $G_n$ with both end vertices in $U$.
A particular graph $G$ with $n$ vertices is {\em $(\varepsilon,p)$-quasirandom} if
$\left|e_{G}(U)-p\binom{|U|}{2}\right|\le \varepsilon n^2$ for all subsets $U\subseteq V(G)$.

One of many equivalent characterizations of $p$-quasirandom sequences $(G_n)_{n\in \mathbb{N}}$ of graphs is the following:
$(G_n)_{n\in \mathbb{N}}$ is quasirandom if and only if 
$G_n$ has edge density $p$ and contains $(p^4+o(1))|G_n|^4$ labelled (non-induced) copies of $C_4$.
Equivalently, $G_n$ contains asymptotically the expected number of copies of $K_2$ and $C_4$ as the Erd\H os-R\'enyi random graph $G(n,p)$.
This leads to the definition of a forcing pair of graphs given below.
To give the definition, we need to introduce the following notation.
If $F$ and $G$ are two graphs,
then $t(F,G)$ is the number of graph homomorphisms from $F$ to $G$,
i.e.\ all maps $f : V(F) \rightarrow V(G)$ with $f(e) \in E(G)$ for all $e\in E(F)$.
In addition, we write $e(F)$ for the number of edges of $F$.

\begin{definition}[Forcing pairs]
	A pair $(F_1,F_2)$ is called {\em forcing}
	if for every  $p \in (0,1]$ and $\varepsilon>0$, there exists a $\delta>0$ such that the following holds.
	Every graph $G$ with
	\begin{align*}
	t(F_1,G) = (1\pm\delta) p^{e(F_1)} \qquad \text{and} \qquad t(F_2,G) = (1\pm\delta) p^{e(F_2)}
	\end{align*}
	is $(\varepsilon,p)$-quasirandom.
\end{definition}

In particular, the pair $(K_2,C_4)$ is forcing. There are two exciting conjectures related to forcing pairs: Sidorenko's conjecture made independently by Sidorenko~\cite{Sid93} and 
by Erd\H{o}s and Simonovits~\cite{Sim84}, and the so-called forcing conjecture made by Skokan and Thoma~\cite{ST04}. While Sidorenko's conjecture asks whether the lower bound on $t(F_2,G)$ is always at least $p^{e(F_2)}$ where $p=t(K_2,G)$, the forcing conjecture states that  any pair $(K_2,F)$, where $F$ is a bipartite graph containing a cycle, is forcing. Due to their relation to Szemer\'edi's regularity lemma, these conjectures expedited tremendous amount of research in extremal combinatorics. 
Thus, additional forcing pairs were studied in~\cite{CGW89,CFS,conlon2012weak,han2011note,HH09,reiher2017forcing,ST04}. 
The first non-bipartite forcing pairs were found by Conlon et al.~in~\cite{conlon2012weak}.
So far, all non-bipartite forcing pairs are obtained by the construction described below.

Let $F$ be a $t$-partite graph and $V(F)=V_1(F) \dot\cup \ldots \dot\cup V_t(F)$ a fixed $t$-coloring of $F$.
The doubling $\mathcal{T}(F)$ on $V_1(F)$ is the graph obtained by taking two identical disjoint copies $F_1$ and $F_2$ of $F$ and
identifying the corresponding vertices in $V_1(F_1)$ and $V_1(F_2)$.
In this way,
we obtain a $t$-coloring of $\mathcal{T}(F)$ given by the sets $V_1(F_1)=V_1(F_2)$, $V_2(F_1) \dot\cup V_2(F_2)$, \ldots, $V_t(F_1) \dot\cup V_t(F_2)$.
For $k\le t$,
the $k$-fold doubling $\mathcal{T}_k(F)$ is defined as the doubling $\mathcal{T}(F)$ on $V_1(F)$ for $k=1$ and
the doubling $\mathcal{T}(\mathcal{T}_{k-1}(F))$ on $V_k(\mathcal{T}_{k-1}(F))$ for $k \ge 2$.
The order of the doublings has no influence on $\mathcal{T}_k(F)$, i.e.\ we could permute $V_1(F),\dots,V_k(F)$ arbitrarily.
Observe that $\mathcal{T}_2(K_2) = C_4$.

The pair $(K_2,C_4)=(K_2,\mathcal{T}_2(K_2))$ is forcing.
The result from~\cite{conlon2012weak} states that
the pair $(K_t,\mathcal{T}_t(K_t))$ is also forcing for any $t\ge 3$.
H\`an et al.~\cite{han2011note} generalized this result for any $t$-colorable graph $F$ in a similar way.
Reiher and Schacht~\cite{reiher2017forcing} improved the result from~\cite{conlon2012weak} for $t=3$ by showing that
the pair $(K_3,\mathcal{T}_2(K_3))$ is forcing.
We generalize this result for $t>3$;
we note that the same was independently proven by Reiher and Schacht~\cite{MS_comm}.

\begin{theorem}
	\label{thm:main_qrg}
	The pair $(K_t,\mathcal{T}_{\lceil (t+1)/2 \rceil }(K_t))$ is forcing for any $t\ge 2$.
\end{theorem}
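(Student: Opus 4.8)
I would pass to the language of graphons and exploit the Cauchy--Schwarz structure of the doubling operation. Recall the standard reformulation of the forcing property: a pair $(F_1,F_2)$ is forcing if and only if every graphon $W$ with $t(F_1,W)=p^{e(F_1)}$ and $t(F_2,W)=p^{e(F_2)}$ satisfies $W\equiv p$ almost everywhere (this follows from compactness of the space of graphons together with continuity of homomorphism densities). Writing $k=\lceil(t+1)/2\rceil$ and using $e(\mathcal{T}_j(K_t))=2^j\binom t2$ for every $j\le t$, the hypotheses become $t(K_t,W)=p^{\binom t2}$ and $t(\mathcal{T}_k(K_t),W)=p^{2^k\binom t2}$, and the goal is to deduce $W\equiv p$ (once $W$ is known to be constant its value is forced by the $K_t$-density).

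The first step is the doubling inequality. For a $t$-coloured graph $F$ and the colour class $V_1$,
\[
t(\mathcal{T}(F),W)=\int \bigl(t_{\mathbf x}(F,W)\bigr)^2\,d\mathbf x \ \ge\ \Bigl(\int t_{\mathbf x}(F,W)\,d\mathbf x\Bigr)^2=t(F,W)^2 ,
\]
where $\mathbf x$ runs over the positions of the vertices of $V_1$ and $t_{\mathbf x}(F,W)$ is the partial homomorphism density with $V_1$ pinned; equality holds exactly when $t_{\mathbf x}(F,W)$ is almost everywhere independent of $\mathbf x$. Iterating along $K_t=\mathcal{T}_0(K_t),\mathcal{T}_1(K_t),\dots,\mathcal{T}_k(K_t)$ yields $t(\mathcal{T}_k(K_t),W)\ge t(K_t,W)^{2^k}$, and since equality holds by hypothesis every inequality in this chain is tight. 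Hence for each $j\le k$ the partial homomorphism density of $\mathcal{T}_{j-1}(K_t)$ with the colour class being doubled at step $j$ pinned is almost everywhere constant; these constancy conditions, together with $t(K_t,W)=p^{\binom t2}$, are all the information the rest of the proof uses.

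The core of the argument is an induction on $t$ with step $t\mapsto t-2$, the base cases being $t=2$, where the pair is $(K_2,C_4)=(K_2,\mathcal{T}_2(K_2))$, and $t=3$, which is the theorem of Reiher and Schacht cited above. For $t\ge4$ one has $t-k=\lfloor(t-1)/2\rfloor\ge1$, so some colour class $V_t$ of $K_t$ is never doubled; I would discard $V_t$ together with one doubled class, say $V_{t-1}$, and use the constancy conditions of the second step to distil from $W$ an auxiliary graphon $W'$ living on the remaining $t-2$ colour classes, so arranged that the hypotheses on $W$ turn into $t(K_{t-2},W')=(p')^{\binom{t-2}2}$ and $t(\mathcal{T}_{k-1}(K_{t-2}),W')=(p')^{2^{k-1}\binom{t-2}2}$ for a suitable $p'\in(0,1]$, while conversely $W'\equiv p'$ forces $W\equiv p$. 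Because $k-1=\lceil((t-2)+1)/2\rceil$, the inductive hypothesis applies to $(K_{t-2},\mathcal{T}_{k-1}(K_{t-2}))$ and $W'$, which closes the induction. The bound $k=\lceil(t+1)/2\rceil$ is exactly what makes the recursion legitimate: it guarantees that at every stage an undoubled class is still available to be discarded while at least two doubled classes survive, so that the process halts at $t\in\{2,3\}$ and never reaches a pair with fewer than two doublings such as $(K_2,\mathcal{T}_1(K_2))=(K_2,P_3)$ with $P_3$ the path on three vertices, which only forces degree-regularity and is not forcing.

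The step I expect to be the obstacle is the construction of $W'$ and, within it, the verification that the $\mathcal{T}_{k-1}(K_{t-2})$-density of $W'$ is exactly $(p')^{2^{k-1}\binom{t-2}2}$. This requires a precise combinatorial identity expressing $t(\mathcal{T}_k(K_t),W)$ in terms of $t(\mathcal{T}_{k-1}(K_{t-2}),W')$ up to contributions from the discarded classes, followed by an application of the constancy conditions from the last doubling(s) to collapse those contributions to scalars; one also has to choose the normalisation so that $W'$ is genuinely $[0,1]$-valued (or else run the whole induction in the category of weighted graphons) and to push the implication ``$W'\equiv p'$, hence $W\equiv p$'' back through the construction. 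Keeping track of the colour classes under repeated doublings, and of which constancy condition is needed at which point, is where most of the technical effort goes.
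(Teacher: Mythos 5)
Your first step, iterating Cauchy--Schwarz along $K_t=\mathcal{T}_0(K_t),\dots,\mathcal{T}_k(K_t)$ and reading off the equality cases, is sound and matches the paper's Lemma~\ref{lem:CSI}: the conclusion is a pointwise identity that constrains $W$ at $k$ points $x_1,\dots,x_k$ (with the remaining $t-k$ variables integrated out). After that you and the paper part ways, and your route has a genuine gap which you yourself flag: the construction of the auxiliary graphon $W'$. A graphon has no colour classes to discard --- the colouring lives on $K_t$, not on $[0,1]$ --- so ``distilling $W'$ on the remaining $t-2$ colour classes'' is not a defined operation, and the pointwise identity from the Cauchy--Schwarz equality cases does not factor $W$ through any smaller kernel. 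For the obvious candidates (conditioning on the discarded variables, or integrating them out) it is not clear that $W'$ takes values in $[0,1]$, nor that both of its densities come out as exact powers of a single $p'$, nor --- and this is the load-bearing step --- that $W'\equiv p'$ forces $W\equiv p$: averaging can lose information, and the whole induction rests on ruling that out. Without the construction of $W'$ and these three verifications there is no proof.

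The paper avoids the reduction entirely. Lemma~\ref{lem:RS_large_k} deduces $W\equiv p$ directly from the pointwise identity by an extremal argument: set $f(x)=\sup_{y_1,y_2}(W(x,y_1)-W(x,y_2))$ and $c=\operatorname{ess}\sup f$; if $c>0$ choose $x_1$ (respectively $x_{k+1}$) whose row of $W$ nearly attains both extremes $a$ and $b=a+c$, and $x_2,\dots,x_k$ so that the identity of Lemma~\ref{lem:CSI} holds for $(x_1,\dots,x_k)$ and for $(x_2,\dots,x_{k+1})$ simultaneously with a common non-zero factor $Q$. Comparing the two instances gives $(b-\eta)^{k-1}(a-\eta)^{t-k}\le(a+\eta)^{k-1}(b+\eta)^{t-k}$, which forces $b-a$ to be small. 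This is exactly where the threshold $k=\lceil(t+1)/2\rceil$ enters: not through a $t\mapsto t-2$ recursion, but because it guarantees $k-1\ge t-k$, making the exponent comparison decisive (strictly so for $t$ even; for $t$ odd, where $k-1=t-k$, a second-order estimate of the slack in the bounds finishes the argument). Your intuition about the role of the threshold is right in spirit but wrong in mechanism.
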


We will present the proof in the language of graph limits, which we now introduce,
since this makes the arguments particularly short and transparent. 
Let $W$ be a {\em kernel}, i.e.\ a bounded symmetric Lebesgue measurable function from $[0,1]^2$.
We write $W \equiv p$ if $W$ is equal to $p$ almost everywhere;
a kernel with $W \equiv p$ is called {\em $p$-quasirandom}.
The homomorphism density extends in a natural way for a graph $F$ and a kernel $W$:
\begin{align*}
t(F,W)=\int_{[0,1]^{V(F)}} \prod_{uv \in E(F)} W(x_u,x_v) \prod_{u \in V(F)} \operatorname{d}x_u.
\end{align*}
A {\em graphon} is a kernel $W$ with values in $[0,1]$.
A pair of graphs $(F_1,F_2)$ is called {\em forcing}
if for every real $p \in (0,1]$,
every graphon $W$ with $t(F_1,W)=p^{e(F_1)}$ and $t(F_2,W)=p^{e(F_2)}$ is $p$-quasirandom.
This definition, see~\cite[Chapter 16]{lovasz2012large}, coincides with the definition of a forcing pair given earlier.

\section{Proof of Theorem~\ref{thm:main_qrg} for $t=4$}

In this section, we give a proof of Theorem~\ref{thm:main_qrg} for $K_4$.
We need the following lemma.

\begin{lemma}[Lemma~10 from~\cite{conlon2012weak}]\label{lem:CHPS_CS}
	Let $W$ be a graphon and $p \in (0,1]$ such that
	\begin{align*}
	t(K_4,W) = p^6 \qquad \text{and} \qquad t(\mathcal{T}_3(K_4),W) = p^{48}.
	\end{align*}
	Then
	\begin{align*}
	W(x_1,x_2) W(x_1,x_3) W(x_2,x_3)   \int_{[0,1]} W(x_1,y) W(x_2,y) W(x_3,y) \operatorname{d}y  = p^6
	\end{align*}
	for almost all $(x_1,x_2,x_3) \in [0,1]^3$.\qed
\end{lemma}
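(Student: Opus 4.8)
The plan is to reformulate both density hypotheses through iterated Cauchy--Schwarz inequalities so that the equality case pins down the integrand pointwise. Write $p = t(K_2,W)$; since $(K_2,C_4)$ is forcing one first wants to know whether $t(K_4,W)=p^6$ already forces $W$ to behave like a quasirandom graphon on triangles, and the point of the lemma is that, together with the $\mathcal{T}_3(K_4)$ density, it does so in the precise form of the displayed identity. Concretely, I would let
\[
f(x_1,x_2,x_3) = W(x_1,x_2)W(x_1,x_3)W(x_2,x_3)\int_{[0,1]} W(x_1,y)W(x_2,y)W(x_3,y)\operatorname{d}y,
\]
so that $\int_{[0,1]^3} f = t(K_4,W) = p^6$, and try to show $\int_{[0,1]^3} f^2$ is at most (hence equal to) $p^{12}$, because then $f$ must be constant almost everywhere, and the constant is forced to be $p^6$ by the first integral.

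The second step is to identify $\int f^2$ with a homomorphism density of a graph built from $K_4$ by a sequence of doublings, and to check that this graph is $\mathcal{T}_3(K_4)$ (or dominates it in a way that can be reversed). Squaring $f$ and integrating amounts to gluing two copies of the ``$K_4$ with a pendant integration vertex'' construction along the triangle $\{x_1,x_2,x_3\}$: this is exactly a doubling on the fourth color class. To get down to $p^{12}=\bigl(p^{48}\bigr)^{1/4}$ I would apply Cauchy--Schwarz three more times, each time doubling along one of the color classes $V_1,V_2,V_3$ of the triangle, so that after the three doublings the edge count has multiplied up to $48$ and the resulting graph is $\mathcal{T}_3(K_4)$. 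Each Cauchy--Schwarz step has the shape $\bigl(\int g\,\operatorname{d}x_i\bigr)^2 \le \int g^2\,\operatorname{d}x_i$ applied after separating the variable $x_i$ from the rest; tracking the exponents gives $\left(\int f^2\right)^{2^3} \le t(\mathcal{T}_3(K_4),W) = p^{48}$, i.e. $\int f^2 \le p^{12}$, as desired. Finally, $p^{12} = \left(\int f\right)^2 \le \int f^2 \le p^{12}$ by Cauchy--Schwarz on $[0,1]^3$, forcing equality throughout, hence $f \equiv p^6$.

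The main obstacle I anticipate is bookkeeping: making sure each of the three Cauchy--Schwarz applications genuinely corresponds to a doubling on the correct color class, that the auxiliary integration variable $y$ (and its copies created along the way) are handled consistently, and that the graph one ends up with after all the doublings is \emph{literally} $\mathcal{T}_3(K_4)$ with its induced $4$-coloring rather than some graph merely containing it. One has to be careful that the pendant vertex $y$ lies in the fourth color class so that the very first squaring is the doubling $\mathcal{T}_1$ on $V_4$, and that the remaining three doublings exhaust $V_1,V_2,V_3$; since the order of doublings does not affect $\mathcal{T}_k$, this is legitimate, but the exponent arithmetic ($6 \to 12 \to 24 \to 48$ on the ``half'' and correspondingly on $f$, or the cleaner route of raising $\int f^2$ to the eighth power) must be done cleanly. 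I would also need the elementary fact that a nonnegative function with $\int g = c$ and $\int g^2 = c^2$ on a probability space is almost everywhere equal to $c$, which is just the equality case of Cauchy--Schwarz against the constant $1$.
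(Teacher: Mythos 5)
Your high-level strategy (reduce to $\int f = p^6$ and $\int f^2 = p^{12}$, then invoke the equality case of Cauchy--Schwarz to get $f\equiv p^6$) is sound in principle, but the key step where you claim to control $\int f^2$ does not work as described, and the bookkeeping you flag as the ``main obstacle'' is in fact where the argument breaks.

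The central problem is your claim that ``squaring $f$ and integrating \ldots is exactly a doubling on the fourth color class.'' It is not, for two separate reasons. First,
\[
\int_{[0,1]^3} f^2 \,\operatorname{d}x
= \int W(x_1,x_2)^2\,W(x_1,x_3)^2\,W(x_2,x_3)^2 \left(\int W(x_1,y)W(x_2,y)W(x_3,y)\,\operatorname{d}y\right)^2 \operatorname{d}x
\]
contains squared factors $W(x_i,x_j)^2$, so it is \emph{not} a homomorphism density $t(H,W)$ of any simple graph $H$, and in particular it cannot be fed into a chain of doublings ending at $\mathcal{T}_3(K_4)$. The object you describe verbally -- gluing two copies of $K_4$ along the triangle $\{x_1,x_2,x_3\}$, sharing the triangle edges once -- would give $\int W_{12}W_{13}W_{23}\,h^2 = t(K_5-e,W)$ (a $5$-vertex, $9$-edge graph), which is a different quantity from $\int f^2$. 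Second, neither of these is the doubling of $K_4$ on its fourth color class: that doubling is the $7$-vertex, $12$-edge graph $\mathcal{T}(K_4)$ obtained by identifying only the vertex in $V_4$, and its density is $\int_{x_4}\bigl(\int_{x_1,x_2,x_3}\prod W\bigr)^2\operatorname{d}x_4$, a squared integral in $x_4$ alone, not $\int f^2$. Since the very first Cauchy--Schwarz step does not produce a graph density, the chain you describe does not reach $t(\mathcal{T}_3(K_4),W)$.

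The exponent arithmetic also does not close. You write $(\int f^2)^{2^3}\le t(\mathcal{T}_3(K_4),W)=p^{48}$, but that would give $\int f^2\le p^6$, not the claimed $p^{12}$; and your parenthetical $p^{12}=(p^{48})^{1/4}$ corresponds to two further doublings, which contradicts ``three more times.'' (And $\int f^2\le p^6$ does not yield $f\equiv p^6$, since $\int f^2\ge (\int f)^2 = p^{12}$ is the binding direction.) For comparison, the paper's route (after Conlon, H\`an, Person and Schacht) is to apply Cauchy--Schwarz three times directly to $t(K_4,W)$, doubling one singleton color class at each step, to obtain $t(K_4,W)^8\le t(\mathcal{T}_1,W)^4\le t(\mathcal{T}_2,W)^2\le t(\mathcal{T}_3(K_4),W)$, and then read off $f\equiv p^6$ from the structure of the equality cases in those three inequalities. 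Your plan of passing through $\int f^2$ is a genuinely different decomposition, and as written it introduces non-graph quantities and an exponent mismatch that you have not resolved; you would need to replace the first ``squaring'' step by an honest doubling (a squared inner integral over one color class) and then carefully argue that equality in that chain forces the three-variable statement, which requires more than just ``$\int g = c$ and $\int g^2 = c^2$.''
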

The proof of this lemma is given in~\cite{conlon2012weak} in the language of quasi\-random (hyper)graphs, and
we sketch the line of arguments here for completeness.
It can be proved by repeatedly applying Cauchy-Schwarz inequality starting to $t(K_4,W)$ three times.
This series of applications of Cauchy-Schwarz inequality yields that $t(\mathcal{T}_3(K_4),W) \ge p^{48}$.
Since it holds $t(\mathcal{T}_3(K_4),W) = p^{48}$ by the assumption of the lemma,
it follows that for almost all values of  $(x_1,x_2,x_3) \in [0,1]^3$ one has 
\[
W(x_1,x_2) W(x_1,x_3) W(x_2,x_3)   \int_{[0,1]} W(x_1,y) W(x_2,y) W(x_3,y) \operatorname{d}y  = p^6.
\] 

The next lemma together with Lemma~\ref{lem:CHPS_CS} readily implies Theorem~\ref{thm:main_qrg} for $k=4$.
\begin{lemma}\label{lem:RS_general}
	Let $W$ be a graphon and $p \in (0,1]$. If it holds that
	\begin{align}
	\label{eq:graphon-matrix}
	W(x_1,x_2) W(x_1,x_3) W(x_2,x_3)   \int_{[0,1]} W(x_1,y) W(x_2,y) W(x_3,y) \operatorname{d}y  = p^6
	\end{align}
	for almost all $(x_1,x_2,x_3) \in [0,1]^3$,
	then $W$ is $p$-quasirandom.
\end{lemma}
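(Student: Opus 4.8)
\emph{Preliminaries.} The goal is to show $W\equiv p$ almost everywhere. Since $W$ is a graphon, $0\le W\le 1$, so the four factors on the left of~\eqref{eq:graphon-matrix} all lie in $[0,1]$; as their product is $p^6>0$, each is positive a.e., and since the last factor is at most $1$ we get $W(x_1,x_2)W(x_1,x_3)W(x_2,x_3)\ge p^6$ a.e. By Fubini this forces $W\ge p^6$ a.e., so $\log W$ is a bounded measurable function. Also, integrating~\eqref{eq:graphon-matrix} over $[0,1]^3$ --- i.e.\ integrating the resulting six-factor product over $y$ as well --- gives $t(K_4,W)=p^6$.

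\emph{Reduction via logarithms.} Put $N(x_1,x_2,x_3):=\int_{[0,1]}W(x_1,y)W(x_2,y)W(x_3,y)\operatorname{d}y$ and $m:=\int_{[0,1]^2}\log W$. Taking logarithms in~\eqref{eq:graphon-matrix} and integrating over $[0,1]^3$ gives $3m+\int_{[0,1]^3}\log N=6\log p$. By concavity of $\log$ (Jensen's inequality in the variable $y$), $\log N(x_1,x_2,x_3)\ge\int_{[0,1]}\bigl(\log W(x_1,y)+\log W(x_2,y)+\log W(x_3,y)\bigr)\operatorname{d}y$ for a.e.\ triple; averaging over $(x_1,x_2,x_3)$ gives $\int_{[0,1]^3}\log N\ge 3m$, hence $m\le\log p$. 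Equality $m=\log p$ forces equality in every instance of Jensen, i.e.\ $y\mapsto W(x_1,y)W(x_2,y)W(x_3,y)$ is a.e.\ constant for a.e.\ $(x_1,x_2,x_3)$. Fixing two of the three coordinates in a full-measure set of values then exhibits $W$ a.e.\ as a product $u(x)v(y)$; symmetry of $W$ upgrades this to $f(x)f(y)$, constancy of the triple product forces $f$ to be a.e.\ constant, and $t(K_4,W)=p^6$ pins the constant to $p$. So it suffices to prove the reverse bound $m\ge\log p$.

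\emph{The main step.} Through the identity $3m+\int_{[0,1]^3}\log N=6\log p$, proving $m\ge\log p$ amounts to showing that the geometric mean of the codegree $N$ is at most $p^3$, i.e.\ $\int_{[0,1]^3}\log N\le 3\log p$. Chasing logarithms once more, this would follow from the star estimate $t(K_{1,3},W)=\int_{[0,1]}d(z)^3\operatorname{d}z\le p^3$, where $d(z):=\int_{[0,1]}W(z,y)\operatorname{d}y$: substitute $N=p^6/\bigl(W(x_1,x_2)W(x_1,x_3)W(x_2,x_3)\bigr)$ from~\eqref{eq:graphon-matrix} into $\log W(x_1,x_2)=6\log p-\log W(x_1,x_3)-\log W(x_2,x_3)-\log N$, then average over $x_3$, integrate over $(x_1,x_2)$, and apply Jensen twice more. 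Proving such a bound is the crux and, I expect, the main obstacle: I would try to get it by feeding the same substitution into the homomorphism densities of $K_4$, of the diamond ($K_4$ with one edge removed), and of $K_{2,3}$ --- each of which~\eqref{eq:graphon-matrix} turns into an integral of a power of $N$ (or of $1/W$) --- and comparing these by a Cauchy-Schwarz inequality whose equality case is exactly the constancy of $N$, arranged so that the known value $t(K_4,W)=p^6$ makes it tight. Once $m=\log p$ is in hand, the equality analysis above yields $W\equiv p$; hence $W$ is $p$-quasirandom.
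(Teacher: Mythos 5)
The proposal is a genuinely different route from the paper's, but it is \emph{incomplete} and the missing piece is precisely the hard half. Your reduction is clean up to a point: from~\eqref{eq:graphon-matrix} you get $W\ge p^6$ a.e., take logarithms, set $m=\int\log W$, and via concavity of $\log$ in the $y$-integral conclude $3m+\int\log N=6\log p$ with $\int\log N\ge 3m$, hence $m\le\log p$; and you note that if one could \emph{also} show $m\ge\log p$, then equality in Jensen forces $y\mapsto W(x_1,y)W(x_2,y)W(x_3,y)$ to be essentially constant for a.e.\ triple, from which a factorization/symmetry argument pins down $W\equiv p$. That equality analysis is plausible (though sketchy), and the easy inequality $m\le\log p$ is correct. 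But the entire proof stands or falls on the reverse inequality $m\ge\log p$, and you explicitly acknowledge you do not know how to prove it: the text says it is ``the crux and\dots the main obstacle'' and offers only a list of things you ``would try.'' That is a genuine gap, not a deferred routine step. Moreover, the specific lever you suggest --- proving $t(K_{1,3},W)=\int d(y)^3\,\mathrm{d}y\le p^3$ --- runs against the grain of the standard convexity bound $t(K_{1,3},W)\ge t(K_2,W)^3$, so it is an upper bound on a quantity that is naturally bounded from \emph{below}; establishing it would essentially require already knowing that the degree function is nearly constant, which is close to what you are trying to prove. As written, the argument does not close.

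For comparison, the paper sidesteps any global averaging and works purely locally with the pointwise identity~\eqref{eq:graphon-matrix}. It sets $f(x)=\sup_{y_1,y_2}\bigl(W(x,y_1)-W(x,y_2)\bigr)$ and $c=\operatorname{ess}\sup f$; if $c>0$ one picks $a,b$ with $c=b-a$ and, for every small $\eta>0$, finds points $x_1,x_2,x_3,x_4$ so that $W(x_1,\cdot)$ is uniformly $\ge a-\eta$ and near $b$ at $x_2,x_3$, while $W(x_4,\cdot)$ is uniformly $\le b+\eta$ and near $a$ at $x_2,x_3$. Writing the identity for the two overlapping triples $(x_1,x_2,x_3)$ and $(x_2,x_3,x_4)$ and comparing, the common factor $W(x_2,x_3)\int W(x_2,y)W(x_3,y)\,\mathrm{d}y$ cancels, yielding $(b-\eta)^2(a-\eta)\le(a+\eta)^2(b+\eta)$ and hence $b-a=O(\eta)$. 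Letting $\eta\to 0$ gives $c=0$, i.e.\ $W\equiv p$. This comparison of two overlapping instances of~\eqref{eq:graphon-matrix} is exactly the ingredient your proposal lacks; if you want to salvage your framework, that is the kind of pointwise exploitation of the hypothesis that should replace the unproven $t(K_{1,3},W)\le p^3$.
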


Before presenting the proof,
we recall the definition of {\em the essential supremum} $\operatorname{ess} \sup(f)$ of a (Lebesgue) measurable function $f \colon \mathbb{R}^n \to \mathbb{R}$.
It is the infimum over all $y \in \mathbb{R}$ with $f(x) \le y$ for almost all $x \in \mathbb{R}^n$,
i.e.\ $\operatorname{ess} \sup(f):=\inf\left\{y\colon \lambda(\{f\ge y\})=0\right\}$, where $\lambda$ is the Lebesgue measure.
The {\em essential infimum} of a function is defined analogously. 

\begin{proof}[Proof of Lemma~\ref{lem:RS_general}]
	Let $f : [0,1] \rightarrow [0,1]$ be defined as
	\[
	f(x)=\sup_{y_1,y_2\in[0,1]}(W(x,y_1)-W(x,y_2)).
	\]
	Observe that $f$ is a measurable function and set $c:=\operatorname{ess}\sup f\in [0,1]$.
	If $c=0$, then $W$ is $p$-quasirandom. Thus, we assume that $c>0$.
	
	The definition of $f$ and $c$ implies that there exist reals $a,b \in \mathbb{R}$ with $c=b-a$ satisfying the following.
	For any $\eta > 0$, there exist $x_1,x_2,x_3,x_4 \in [0,1]$ such that
	\begin{equation}
	\begin{aligned}
	\label{eq:supbound}
	W(x_1,x_2) &\ge b - \eta, \qquad W(x_1,x_3) &\ge b - \eta, \\
	W(x_2,x_4) &\le a + \eta, \qquad W(x_3,x_4) &\le a + \eta,
	\end{aligned}
	\end{equation}
	and~\eqref{eq:graphon-matrix} holds for $(x_1,x_2,x_3)$ and $(x_2,x_3,x_4)$.
	In addition,
	we can assume that $a>0$, $W(x_2,x_3)>0$, $\operatorname{ess}\sup_{y\in [0,1]} W(x_2,y) W(x_3,y)>0$ (because $p>0$), and that
	\begin{align}
	\label{eq:genbound}
	W(x_1,y) \ge a -\eta \quad \text{and} \quad W(x_4,y) \le b + \eta \quad \text{for almost all } y\in [0,1].
	\end{align}
	
	We get from \eqref{eq:graphon-matrix} that
	\begin{align*}
	W(x_1,x_2) W(x_1,x_3) W(x_2,x_3)   \int_{[0,1]} W(x_1,y) W(x_2,y) W(x_3,y) \operatorname{d}y   \\
	= p^6 = W(x_2,x_4) W(x_3,x_4) W(x_2,x_3)   \int_{[0,1]} W(x_4,y) W(x_2,y) W(x_3,y) \operatorname{d}y 
	\end{align*}
	Using \eqref{eq:supbound} and \eqref{eq:genbound} we can lower bound the left hand side by
	\begin{align*}
	(b-\eta)^2 (a-\eta) W(x_2,x_3)   \int_{[0,1]} W(x_2,y) W(x_3,y) \operatorname{d}y
	\end{align*}
	and similarly we can upper bound the right hand side by 
	\begin{align*}
	(a+\eta)^2 (b+\eta) W(x_2,x_3)   \int_{[0,1]} W(x_2,y) W(x_3,y) \operatorname{d}y.
	\end{align*}
	
	As $W(x_2,x_3) \int_{[0,1]} W(x_2,y) W(x_3,y) \operatorname{d}y$ is non-zero, we obtain that
	\begin{align*}
	(b-\eta)^2 (a-\eta) \le (a+\eta)^2 (b+\eta)
	\end{align*}
	and then deduce that $b \le a + 10\eta (b/a)$. 
	Together with the assumption that $b > a$ this implies that $W(x,y) \in [p \pm 20\eta (b/a)]$ for almost all $(x,y) \in [0,1]^2$.
	Since this holds for every $\eta>0$, the lemma follows.
\end{proof}

\section{Proof of Theorem \ref{thm:main_qrg}---general case}

The proof of the general case is based on the same idea as used in the previous section and
follows from the next two lemmas.
The first lemma can be proven by repeated applications of the Cauchy-Schwarz inequality similarly to the proof of Lemma~\ref{lem:CHPS_CS}.

\begin{lemma}\label{lem:CSI}
	Let $W$ be a graphon, $p \in (0,1]$, $t \ge 3$, and $k:=\lceil (t+1)/2 \rceil$ such that
	\begin{align*}
	t(K_t,W) = p^{e(K_t)} \qquad \text{and} \qquad t(\mathcal{T}_k(K_t),W) = p^{e(\mathcal{T}_k(K_t))}.
	\end{align*}
	Then
	\begin{equation*}
	\begin{aligned}
	p^{e(K_t)} = &\left( \prod_{1 \le i < j \le k} W(x_i,x_j) \right)  \\
		& \cdot  \int_{[0,1]^{t-k}} \left(  \prod_{1 \le i < j \le t-k} W(y_i,y_j) \right)  \left(   \prod_{i \in [k], j \in [t-k]} W(x_i,y_j) \right) \prod_{i \in [t-k]} \operatorname{d}y_i .
	\end{aligned}
	\end{equation*}
	for almost all $(x_1,\dots,x_k) \in [0,1]^k$.
\end{lemma}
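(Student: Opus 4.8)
The plan is to make the combinatorial structure of $\mathcal{T}_k(K_t)$ explicit, rewrite $t(\mathcal{T}_k(K_t),W)$ accordingly, and then feed the two equality hypotheses into the resulting chain of Cauchy--Schwarz inequalities. Write $G\colon[0,1]^k\to[0,\infty)$ for the function on the right-hand side of the claimed identity (so the assertion is that $G\equiv p^{e(K_t)}$ almost everywhere), and write $F(x_1,\dots,x_k)$ for its inner integral, so that $G=\bigl(\prod_{1\le i<j\le k}W(x_i,x_j)\bigr)F$. Two facts will be used repeatedly: $G$ is nonnegative and symmetric in its $k$ arguments, and $\int_{[0,1]^k}G$ reassembles all edges of $K_t$, so $\int_{[0,1]^k}G=t(K_t,W)$.

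First I would describe $\mathcal{T}_k(K_t)$: by induction on the number of doublings it is the edge-disjoint union of $2^k$ copies of $K_t$, one for each $\varepsilon\in\{0,1\}^k$, where in the copy indexed by $\varepsilon$ the $j$-th vertex ($1\le j\le k$) is the one shared by exactly those copies whose indices agree with $\varepsilon$ in all coordinates other than the $j$-th, while the vertices $k+1,\dots,t$ are private to their copy. Since each doubling glues two copies along a colour class, which is an independent set, no edges are merged and $e(\mathcal{T}_k(K_t))=2^k\binom t2=2^k e(K_t)$. Assigning a variable $x_j^{\sigma}$ to the $j$-th shared vertex of type $\sigma\in\{0,1\}^{k-1}$ (there are $2^{k-1}$ of them for each $j\le k$) and integrating out, within each copy, that copy's private vertices---which contributes exactly $F$ evaluated at the copy's core variables---one obtains
\[
t(\mathcal{T}_k(K_t),W)=\int\ \prod_{\varepsilon\in\{0,1\}^k}G\bigl(x_1^{\varepsilon_{-1}},\dots,x_k^{\varepsilon_{-k}}\bigr)\ \operatorname{d}\bigl(\text{all }x_j^{\sigma}\bigr),
\]
where $\varepsilon_{-j}$ denotes $\varepsilon$ with its $j$-th coordinate deleted.

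The second step is $k$ successive applications of the Cauchy--Schwarz inequality, one per doubling, exactly as in the proof of Lemma~\ref{lem:CHPS_CS}: the $j$-th doubling gives $t(\mathcal{T}_j(K_t),W)\ge t(\mathcal{T}_{j-1}(K_t),W)^2$ (with $t(\mathcal{T}_1(K_t),W)\ge t(K_t,W)^2$), so that $t(\mathcal{T}_k(K_t),W)\ge t(K_t,W)^{2^k}=\bigl(\int_{[0,1]^k}G\bigr)^{2^k}$. Inserting the hypotheses $t(K_t,W)=p^{e(K_t)}$ and $t(\mathcal{T}_k(K_t),W)=p^{e(\mathcal{T}_k(K_t))}=p^{2^k e(K_t)}$ forces every inequality in this chain to be an equality. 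It then remains to extract the pointwise identity, and this I expect to be the only genuinely delicate point: equality throughout, together with the symmetry of $G$, should force $G$ to be almost everywhere equal to a constant, which then has to be its own average $\int_{[0,1]^k}G=t(K_t,W)=p^{e(K_t)}$---exactly the claim. Concretely, one proves by induction on $k$ that, for nonnegative symmetric $G$ with $\int_{[0,1]^k}G>0$, the quantity $\int\prod_{\varepsilon\in\{0,1\}^k}G(x_1^{\varepsilon_{-1}},\dots,x_k^{\varepsilon_{-k}})$ equals $\bigl(\int_{[0,1]^k}G\bigr)^{2^k}$ only if $G$ is a.e.\ constant, the base case $k=1$ being just the equality case of the ordinary Cauchy--Schwarz inequality. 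The case $t=4$, i.e.\ $k=3$, is precisely Lemma~\ref{lem:CHPS_CS}, whose (sketched) proof is the template to follow.
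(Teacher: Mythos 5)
Your reduction is correct and is exactly the route the paper itself indicates (it simply refers to repeated Cauchy--Schwarz ``similarly to the proof of Lemma~\ref{lem:CHPS_CS}''): the decomposition of $\mathcal{T}_k(K_t)$ into $2^k$ edge-disjoint copies of $K_t$ glued along the first $k$ colour classes, the identity $t(\mathcal{T}_k(K_t),W)=\int\prod_{\varepsilon\in\{0,1\}^k}G(x_1^{\varepsilon_{-1}},\dots,x_k^{\varepsilon_{-k}})$, and the chain $t(\mathcal{T}_j(K_t),W)\ge t(\mathcal{T}_{j-1}(K_t),W)^2$ giving $t(\mathcal{T}_k(K_t),W)\ge t(K_t,W)^{2^k}$ are all right, and the equality hypotheses do force equality at every stage.

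The place where your outline undersells the work is precisely the step you flag: that equality forces $G$ to be a.e.\ constant. The proposed induction does not close as routinely as ``the base case $k=1$ being just the equality case of ordinary Cauchy--Schwarz'' suggests. In the inductive step you obtain two facts: $\Phi$, a function of $2^{k-1}$ variables, is a.e.\ constant, and (by the inductive hypothesis applied to $G':=\int G\,\operatorname{d}x_k$) $G'$ is a.e.\ constant. Deducing that $G$ itself is a.e.\ constant from these is a genuine argument: you cannot restrict $\Phi$ to diagonal tuples, which form a null set in $[0,1]^{2^{k-1}}$. Already for $k=2$ one needs an $L^2$ duality or spectral argument: $\int G(x,u)G(x,v)\,\operatorname{d}x = c^2$ for a.e.\ $(u,v)$ together with $\int G(x,u)\,\operatorname{d}u = c$ for a.e.\ $x$ gives $\langle G(\cdot,u)-c,\,G(\cdot,v)-c\rangle_{L^2}=0$ for a.e.\ $(u,v)$, and only after testing this bilinear identity against a single $L^2$ function (or invoking the spectral decomposition of the symmetric kernel $G$) does one conclude $G\equiv c$ a.e. For larger $k$ the same duality idea applies but must be combined with the fact that the $(k-1)$-dimensional box form vanishes only on the a.e.-zero function. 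So your approach is the right one and matches the paper's, but the heart of the proof--which the paper itself delegates to~\cite{conlon2012weak}--lies exactly in this inductive step, and it needs a careful analytic argument rather than a reduction to the one-variable Cauchy--Schwarz equality case.
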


\begin{lemma}\label{lem:RS_large_k}
	Let $W$ be a graphon, $p \in (0,1]$, $t\ge 3$, and $k:=\lceil (t+1)/2 \rceil$.
	If it holds that
	\begin{equation}
	\begin{aligned}
	\label{eq:graphon-matrix2}
	p^{e(K_t)}& =  \left( \prod_{1 \le i < j \le k} W(x_i,x_j) \right)  \\
	& \cdot \int_{[0,1]^{t-k}} \left(  \prod_{1 \le i < j \le t-k} W(y_i,y_j) \right) \left(   \prod_{i \in [k], j \in [t-k]} W(x_i,y_j) \right) \prod_{i \in [t-k]} \operatorname{d}y_i .
	\end{aligned}
\end{equation}
	for almost all $(x_1,\dots,x_k) \in [0,1]^k$,
	then $W$ is $p$-quasirandom.
\end{lemma}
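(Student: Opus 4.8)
The plan is to mimic the argument of Lemma~\ref{lem:RS_general}, but now with $k$ ``top'' variables and $t-k$ ``integrated'' variables instead of $3$ and $1$. As before, I would let
\[
f(x)=\sup_{y_1,y_2\in[0,1]}\bigl(W(x,y_1)-W(x,y_2)\bigr),
\]
set $c:=\operatorname{ess}\sup f$, and assume for contradiction that $c>0$; write $c=b-a$ with $b>a$ the corresponding essential sup/inf values of $W$ along a typical slice, and note $a>0$ since $p>0$. The key structural point is the choice $k=\lceil(t+1)/2\rceil$, which guarantees $k\ge t-k+1$: there are at least as many ``$x$'' coordinates as ``$y$'' coordinates (plus one to spare). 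This is exactly what lets us plug two carefully chosen tuples into \eqref{eq:graphon-matrix2} and compare.

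The main step is to produce, for every $\eta>0$, two tuples that make the edge factors $\prod_{i<j\le k}W(x_i,x_j)$ and the cross factors $\prod W(x_i,y_j)$ either uniformly $\ge b-\eta$ or uniformly $\le a+\eta$, while keeping the remaining common factor — the integral over the $y_i$'s together with whatever $W(x_i,x_j)$ factors are shared — bounded away from zero. Concretely, pick vertices $u_1,\dots,u_{t-k}$ that behave like ``high'' endpoints and $v_1,\dots,v_{t-k}$ that behave like ``low'' endpoints relative to almost every point, using \eqref{eq:supbound}- and \eqref{eq:genbound}-type selections: there is a positive-measure set of points $x$ with $W(x,u_i)\ge b-\eta$ for all $i$ and simultaneously $W(x,v_j)\le a+\eta$ for all $j$ (such $x$ exist because each condition removes only a null set after fixing the $u_i,v_j$ appropriately, and there are finitely many of them). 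Then I evaluate \eqref{eq:graphon-matrix2} once at the all-``high'' $x$-tuple and once at the all-``low'' $x$-tuple. In the first case every $W(x_i,x_j)$ with $i<j\le k$ is $\ge b-\eta$ — there are $\binom{k}{2}$ such factors — and every cross term $W(x_i,y_j)$ is $\ge a-\eta$; in the second case these are $\le a+\eta$ and $\le b+\eta$ respectively. The common integral $\int\prod_{i<j\le t-k}W(y_i,y_j)\,\mathrm{d}y$ piece is the same in both and is nonzero (arrange this using $p>0$ exactly as in the $t=4$ proof, choosing the auxiliary vertices so that the relevant sub-integrand has positive essential supremum). Dividing out yields an inequality of the shape
\[
(b-\eta)^{\binom{k}{2}}(a-\eta)^{k(t-k)}\le (a+\eta)^{\binom{k}{2}}(b+\eta)^{k(t-k)},
\]
and since $\binom{k}{2}\ge k(t-k)$ — equivalently $k-1\ge 2(t-k)$, i.e. $2k\ge t+ (k-1)/1$... more carefully, $k=\lceil(t+1)/2\rceil$ gives $\binom{k}{2}-k(t-k)=\tfrac{k}{2}(k-1-2(t-k))=\tfrac{k}{2}(3k-1-2t)\ge 0$ — the exponent of the ``$b$''-type factor on the left is at least that on the right, so letting $\eta\to 0$ forces $b\le a$, contradicting $b>a$. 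Tracking the $\eta$ dependence quantitatively (as in the last line of the $t=4$ proof: $b\le a+O(\eta\cdot b/a)$, hence $W\in p\pm O(\eta b/a)$ a.e.) then gives $p$-quasirandomness.

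The main obstacle is the bookkeeping in the vertex-selection step: ensuring that the finitely many ``high'' and ``low'' conditions can be imposed simultaneously on a positive-measure set of $x$-tuples, \emph{and} that the leftover integral factor is genuinely bounded away from zero rather than merely nonnegative. The first is handled by intersecting finitely many co-null sets (each condition of the form ``$W(x,u)\ge b-\eta$ for a.e.\ $x$ fails only on a null set, after choosing $u$ in a suitable positive-measure set'') — this is where $k\ge t-k+1$ matters, since we need enough room to separate the roles. The second is exactly the $\operatorname{ess}\sup>0$ trick already used for $t=4$, applied to the $(t-k-1)$-dimensional integrand; it goes through verbatim because $p>0$. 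Once both are in place, the inequality and the limit $\eta\to0$ are routine.
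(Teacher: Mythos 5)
Your plan of choosing two \emph{disjoint} tuples (an all-``high'' tuple and an all-``low'' tuple) runs into an arithmetic obstruction that, except at $t=4$, kills the argument. With disjoint tuples the number of changed $x$-$x$ factors is $\binom{k}{2}$ and the number of changed cross factors is $k(t-k)$, so you need $\binom{k}{2}\ge k(t-k)$, i.e.\ $3k\ge 2t+1$. With $k=\lceil (t+1)/2\rceil$ this reads, for even $t=2m$, $3(m+1)\ge 4m+1$, i.e.\ $m\le 2$ ($t\le 4$), and for odd $t=2m+1$, $3(m+1)\ge 4m+3$, i.e.\ $m\le 0$. Concretely, at $t=5$, $k=3$ one has $\binom{3}{2}=3<6=k(t-k)$; already at $t=3$, $k=2$ one gets $1<2$. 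Your computation $\frac{k}{2}(3k-1-2t)\ge 0$ is therefore false for every $t\ge 5$ (and only an equality at $t=4$), and the inequality $(b-\eta)^{\binom{k}{2}}(a-\eta)^{k(t-k)}\le (a+\eta)^{\binom{k}{2}}(b+\eta)^{k(t-k)}$ gives nothing as $\eta\to 0$.

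The paper avoids this by \emph{overlapping} the two tuples: it takes points $x_1,\dots,x_{k+1}$ and compares \eqref{eq:graphon-matrix2} evaluated at $(x_1,\dots,x_k)$ with its evaluation at $(x_2,\dots,x_{k+1})$, where $x_1$ is ``high'' and $x_{k+1}$ is ``low'' against $x_2,\dots,x_k$. Because the two tuples share $k-1$ coordinates, only the $k-1$ factors $W(x_1,x_i)$ (resp.\ $W(x_{k+1},x_i)$) change among the $x$-$x$ terms, and only the $t-k$ cross factors $W(x_1,y_j)$ (resp.\ $W(x_{k+1},y_j)$) change in the integrand; everything else is a genuinely common factor $Q$ that can be divided out. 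The resulting inequality is $(b-\eta)^{k-1}(a-\eta)^{t-k}\le (a+\eta)^{k-1}(b+\eta)^{t-k}$, and $k-1\ge t-k$ is \emph{exactly} the inequality encoded by $k=\lceil (t+1)/2\rceil$. This is the structural point you were reaching for when you wrote $k\ge t-k+1$, but your two-disjoint-tuples set-up doesn't use it. Finally, note that even with the correct exponents, for odd $t$ one has $k-1=t-k$, so the inequality is an identity at $\eta=0$ and ``letting $\eta\to 0$ forces $b\le a$'' is not enough; the paper handles this by observing that the gap between the lower and upper estimates is $O(\eta)$, so the bounds $W(x_1,y)\ge a-\eta$ and $W(x_{k+1},y)\le b+\eta$ must in fact be nearly tight, which squeezes $b-a=O(\eta)$. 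Your sketch does not address this equality case.
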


\begin{proof}
	Let $f : [0,1] \rightarrow [0,1]$ be defined as
	\[
	f(x)=\sup_{y_1,y_2\in[0,1]} (W(x,y_1)-W(x,y_2)).
	\]
	Again, $f$ is a measurable function and we set $c:=\operatorname{ess} \sup f \in [0,1]$.
	If $c=0$, then $W$ is $p$-quasirandom.
	So, we assume that $c>0$ and consider positive reals $a,b \in \mathbb{R}$ with $c=b-a$ such that the following holds.
	For any $\eta > 0$, there exist $x_1,\dots,x_{k+1} \in [0,1]$ such that
	\begin{align}
	\label{eq:supbound2}
	W(x_1,x_i) \ge b - \eta \quad \text{and} \quad W(x_{k+1},x_i) \le a + \eta \quad \text{for all } i=2,\dots,k,
	\end{align}
	for $(x_1,\dots,x_k)$ and $(x_2,\dots,x_{k+1})$ equation \eqref{eq:graphon-matrix2} holds,
	\begin{equation}
	\begin{aligned}
	\label{eq:genbound2}
	W(x_1,y), W(x_{k+1},y) \in [a-\eta,b+\eta] \text{ for almost all } y\in [0,1],
	\end{aligned}
	\end{equation}
	and $Q$ is non-zero, where
	\begin{equation*}
	\begin{aligned}
	Q = & \left( \prod_{2 \le i < j \le k} W(x_i,x_j) \right) \\ & \cdot  \int_{[0,1]^{t-k}} \left(  \prod_{1 \le i < j \le t-k} W(y_i,y_j) \right) \left(   \prod_{2 \le i \le k, j \in [t-k]} W(x_i,y_j) \right) \prod_{i \in [t-k]} \operatorname{d}y_i.
	\end{aligned}
	\end{equation*}
	We get from \eqref{eq:graphon-matrix2} that
	
	\begin{equation*}
	\begin{aligned}
		& \left( \prod_{1 \le i < j \le k} W(x_i,x_j) \right) \\ & \cdot  \int_{[0,1]^{t-k}} \left(  \prod_{1 \le i < j \le t-k} W(y_i,y_j) \right) \left(   \prod_{1\le i\le k, j \in [t-k]} W(x_i,y_j) \right) \prod_{i \in [t-k]} \operatorname{d}y_i = p^{e(K_t)} \\
		= & \left( \prod_{2 \le i < j \le k+1} W(x_i,x_j) \right) \\ & \qquad \cdot  \int_{[0,1]^{t-k}} \left(  \prod_{1 \le i < j \le t-k} W(y_i,y_j) \right) \left(   \prod_{2 \le i \le k+1 , j \in [t-k]} W(x_i,y_j) \right) \prod_{i \in [t-k]} \operatorname{d}y_i.
	\end{aligned}
	\end{equation*}
	Using \eqref{eq:supbound2} and \eqref{eq:genbound2} we can lower bound the left hand side by
	\begin{align}
	\label{eq:lower}
	(b-\eta)^{k-1} (a-\eta)^{t-k} Q
	\end{align}
	and similarly we can upper bound the right hand side by 
	\begin{align}
	\label{eq:upper}
	(a+\eta)^{k-1} (b+\eta)^{t-k} Q.
	\end{align}
	As $Q$ is non-zero
	this gives
	\begin{align*}
	(b-\eta)^{k-1} (a-\eta)^{t-k} \le (a+\eta)^{k-1} (b+\eta)^{t-k}.
	\end{align*}
	If $t$ is even, we have $k-1>t-k$ and we can finish the proof similar to the case $t=4$.
	If $t$ is odd, i.e.\ $k-1=t-k$, a more refined argument is needed.
	
	Since $W$ is a graphon, the difference of~\eqref{eq:lower} and~\eqref{eq:upper} is at most $2t \eta$.
	In particular, the estimates used to derive~\eqref{eq:lower} cannot be too wasteful and it follows that
	\begin{align*}
	W(x_1,y) \le a+ 2 t \eta \quad \text{and} \quad W(x_{k+1},y) \ge b - 2 t \eta \quad \text{for almost all } y\in [0,1].
	\end{align*}
	Together with~\eqref{eq:genbound2} we get
	\begin{align*}
	\label{eq:Wcont1}
	W(x_1,y) \in [a \pm 2t \eta] \quad \text{and}  \quad W(x_{k+1},y) \in [b \pm 2t \eta] \quad \text{for almost all } y\in [0,1].
	\end{align*}
	It follows that $b-a\le 2(t+1)\eta$ and consequently $W(x,y)\in[p\pm 4(t+1)\eta]$ for almost all $(x,y)\in [0,1]^2$.
\end{proof}

\end{document}